\newtheorem{theorem}{Theorem}
\newtheorem{lemma}{Lemma}
\newtheorem{corollary}{Corollary}
\title{Step size adaptation in first-order method for stochastic strongly convex programming}
\author{
	Peng Cheng\\
	pc175@uow.edu.au
}
\begin{document}

\maketitle

\begin{abstract}

We propose a first-order method for stochastic strongly convex optimization that attains $O(1/n)$ rate of convergence, analysis show that the proposed method is simple, easily to implement, and in worst case, asymptotically four times faster than its peers. We derive this method from several intuitive observations that are generalized from existing first order optimization methods.

\end{abstract}


\section{Problem Setting}

In this article we seek a numerical algorithm that iteratively approximates the solution $w^*$ of the following strongly convex optimization problem:

\begin{equation}\label{eq1.0}
	w^*=\arg \min_{\Gamma_f} f(.)
\end{equation}

\noindent where $f(.): \Gamma_f \rightarrow \mathcal{R}$ is an unknown, not necessarily smooth, multivariate and $\lambda$-strongly convex function, with $\Gamma_f$ its convex definition domain. The algorithm is not allowed to accurately sample $f(.)$ by any means since $f(.)$ itself is unknown. Instead the algorithm can call stochastic oracles $\tilde{\omega}(.)$ at chosen points $\tilde{x}_1,\ldots,\tilde{x}_n$, which are unbiased and independent probabilistic estimators of the first-order local information of $f(.)$ in the vicinity of each $x_i$:

\begin{equation}\label{eq1.1}
	\tilde{\omega}(x_i) = \{\tilde{f}_i(x_i), \bigtriangledown \tilde{f}_i(x_i)\}
\end{equation}

\noindent where $\bigtriangledown$ denotes random subgradient operator, $\tilde{f}_i(.): \Gamma_f \rightarrow \mathcal{R}$ are independent and identically distributed (i.i.d.) functions that satisfy:

\begin{subequations}\label{eq1.2}
\begin{align}
	\text{(unbiased) } \mathbb{E}[\tilde{f}_i(.)] &= f(.) \quad \forall i \label{eq1.2a} \\
	\text{(i.i.d) }\mathrm{Cov} \left( \tilde{f}_i(.), \tilde{f}_j(.) \right) &= 0 \quad \forall i \neq j \label{eq1.2b}
\end{align}
\end{subequations}

Solvers to this kind of problem are highly demanded by scientists in large scale computational learning, in which the first-order stochastic oracle is the only measurable information of $f(.)$ that scale well with both dimensionality and scale of the learning problem. For example, a stochastic first-order oracle in structural risk minimization (a.k.a. training a support vector machine) can be readily obtained in $O(1)$ time \cite{bottou2008SGD}.

\section{Algorithm}

The proposed algorithm itself is quite simple but with a deep proof of convergence. The only improvement comparing to SGD is the selection of step size in each iteration, which however, results in substantial boost of performance, as will be shown in the next section.

\begin{figure*}
\noindent \rule{\linewidth}{0.5mm}\\
\textbf{Algorithm 1}\\
\rule{\linewidth}{0.3mm}
\begin{algorithmic}
\STATE Receive $x_1, \Gamma_f, \lambda$
\STATE $u_1 \gets 1$, $y_1 \gets x_1$
\STATE Receive $\tilde{f}_1(x_1), \bigtriangledown \tilde{f}_1(x_1)$
\STATE $\tilde{P}_1(.) \gets \Gamma_f \left\{ \tilde{f}_1(x_1)+\langle \bigtriangledown \tilde{f}_1(x_1), .-x_i \rangle+\frac{\lambda}{2} ||.-x_1||^2 \right\}$
\FOR {$i=2,\ldots,n$}
	\STATE $x_i \gets \arg\min \tilde{P}_{i-1}(.)$
	\STATE Receive $\tilde{f}_i(x_i), \bigtriangledown \tilde{f}_i(x_i)$
	\STATE $\tilde{p}_i(.) \gets \Gamma_f \left\{ \tilde{f}_i(x_i)+\langle \bigtriangledown \tilde{f}_i(x_i), .-x_i \rangle+\frac{\lambda}{2} ||.-x_i||^2 \right\}$
	\STATE $\tilde{P}_i(.) \gets (1-\frac{u_{i-1}}{2}) \tilde{P}_{i-1}(.) + \frac{u_{i-1}}{2} \tilde{p}_i(.)$
	\STATE $y_i \gets (1-\frac{u_{i-1}}{2}) y_{i-1} + \frac{u_{i-1}}{2} x_{i}$
	\STATE $u_i \gets u_{i-1}-\frac{u_{i-1}^2}{4}$
\ENDFOR
\STATE Output $y_n$
\end{algorithmic}
\rule{\linewidth}{0.5mm}
\end{figure*}

\section{Analysis}

The proposed algorithm is designed to generate an output $y$ that reduces the suboptimality:

\begin{equation}\label{eq3.1}
	S(y)=f(y)-\min f(.)
\end{equation}

\noindent as fast as possible after a number of operations. We derive the algorithm by several intuitive observations that are generalized from existing first order methods. First, we start from worst-case upper-bounds of $S(y)$ in deterministic programming:

\begin{lemma}\label{state3.1}

(Cutting-plane bound \cite{joachims2006cuttingplane}): Given $n$ deterministic oracles $\Omega_n=\{\omega(x_1),\ldots,\omega(x_n)\}$ defined by:

\begin{equation}\label{eq3.2}
	\omega(x_i)=\{f(x_i), \bigtriangledown f(x_i)\}
\end{equation}

If $f(.)$ is a $\lambda$-strongly convex function, then $\min f(.)$ is unimprovably lower bounded by:

\begin{equation}\label{eq3.1.5}
	\min f(.) \geq \max_{i=1 \ldots n} p_i(w^*) \geq \min \max_{i=1 \ldots n} p_i(.)
\end{equation}

\noindent where $w^*$ is the unknown minimizer defined by \eqref{eq1.0}, and $p_i(.): \Gamma_f \rightarrow \mathcal{R}$ are proximity control functions (or simply prox-functions) defined by $p_i(.)=f(x_i)+\langle \bigtriangledown f(x_i), .-x_i \rangle + \frac{\lambda}{2} ||.-x_i||^2$.

\end{lemma}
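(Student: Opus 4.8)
The plan is to establish the two inequalities in \eqref{eq3.1.5} separately, relying on the defining property of $\lambda$-strong convexity applied at each sampled point $x_i$. The key fact I would use is that for a $\lambda$-strongly convex function, any subgradient $\bigtriangledown f(x_i)$ gives rise to a \emph{quadratic} lower bound rather than merely a linear one: for every $. \in \Gamma_f$ we have $f(.) \geq f(x_i) + \langle \bigtriangledown f(x_i), . - x_i \rangle + \frac{\lambda}{2}\|.-x_i\|^2 = p_i(.)$. This is the standard strong-convexity inequality and is precisely the reason each prox-function $p_i$ sits below $f$ everywhere on the domain.

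For the left inequality $\min f(.) \geq \max_{i} p_i(w^*)$, I would argue as follows. Since $p_i(.) \leq f(.)$ pointwise for each $i$, evaluating at the minimizer $w^*$ gives $p_i(w^*) \leq f(w^*) = \min f(.)$ for every $i$. Taking the maximum over $i=1,\ldots,n$ preserves the inequality because each term is individually dominated, so $\max_i p_i(w^*) \leq \min f(.)$, which is exactly the claimed bound after reversing the direction. This step is essentially immediate once the pointwise domination is in hand.

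For the right inequality $\max_{i} p_i(w^*) \geq \min \max_{i} p_i(.)$, the observation is that $w^*$ is just one particular feasible point of the minimization on the right-hand side. The function $. \mapsto \max_{i} p_i(.)$ attains its infimum over $\Gamma_f$ at some point, and by definition of the minimum, its value at the specific point $w^*$ can only be larger than or equal to that infimum. Hence $\max_i p_i(w^*) \geq \min_{. \in \Gamma_f} \max_i p_i(.)$, giving the chain in \eqref{eq3.1.5}.

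The conceptual heart of the lemma, rather than any of these short deductions, is the claim of \emph{unimprovability}: that no lower bound derived solely from the information in $\Omega_n$ can exceed $\min \max_i p_i(.)$. I would address this by a worst-case or adversarial argument, exhibiting a $\lambda$-strongly convex function $g$ consistent with all the oracle data $\{f(x_i), \bigtriangledown f(x_i)\}$ whose true minimum equals the bound $\min \max_i p_i(.)$; the natural candidate is $g(.) = \max_i p_i(.)$ itself, which is $\lambda$-strongly convex as a pointwise maximum of $\lambda$-strongly convex quadratics and matches the first-order data at each $x_i$. Verifying that $g$ genuinely reproduces the prescribed subgradients at every $x_i$ while remaining globally consistent is the step I expect to be the main obstacle, since one must check that the active piece of the maximum at each $x_i$ is indeed $p_i$ and that no sampled constraint forces the minimum higher than intended.
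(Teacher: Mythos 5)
Your proposal is correct and follows essentially the same route as the paper: strong convexity gives $f(.) \geq p_i(.)$ pointwise, evaluation at $w^*$ yields the first inequality, feasibility of $w^*$ in the minimization yields the second, and unimprovability is witnessed by the candidate $g(.)=\max_{i} p_i(.)$, which is exactly the paper's (tersely asserted) tightness example $f(.)=\max_{i} p_i(.)$. The verification you flag as the main obstacle in fact dissolves immediately: since $p_i(.) \leq f(.)$ everywhere, we get $p_i(x_j) \leq f(x_j) = p_j(x_j)$ for all $i,j$, so $p_j$ is an active piece of the maximum at $x_j$, whence $g(x_j)=f(x_j)$ and $\bigtriangledown f(x_j)$, being a subgradient of the active piece, is a subgradient of $g$ at $x_j$, so $g$ is consistent with all the oracle data.
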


\begin{proof}

By strong convexity of $f(.)$ we have:

\begin{flalign}
	&	&\mathbb{B}_f(.||x_i) &\geq \frac{\lambda}{2} ||.-x_i||^2 && \nonumber \\
	&\Longrightarrow& f(.)	&\geq p_i(.) && \nonumber \\
	&\Longrightarrow& f(.)	&\geq \max_{i=1,\ldots,n} p_i(.) && \label{eq3.1.6c}\\
	&\Longrightarrow& \min f(.) = f(w^*) &\geq \max_{i=1,\ldots,n} p_i(w^*) \geq \min \max_{i=1,\ldots,n} p_i(.) && \label{eq3.1.6d}
\end{flalign}

\noindent where $\mathbb{B}_f(x_1||x_2)=f(x_1)-f(x_2)-\langle \bigtriangledown f(x_2),x_1-x_2 \rangle$ denotes the Bregman divergence between two points $x_1,x_2 \in \Gamma_f$. Both sides of \eqref{eq3.1.6c} and \eqref{eq3.1.6d} become equal if $f(.) = \max_{i=1,\ldots,n} p_i(.)$, so this bound cannot be improved without any extra condition.

\end{proof}

\begin{lemma}\label{state3.1.2}

(Jensen's inequality for strongly convex function) Given $n$ deterministic oracles $\Omega_n=\{ \omega(x_1), \ldots, \omega(x_n) \}$ defined by \eqref{eq3.2}. If $f(.)$ is a $\lambda$-strongly convex function, then for all $\alpha_1,\ldots,\alpha_n$ that satisfy $\sum_{i=1}^n \alpha_i = 1, \alpha_i \geq 0 \quad \forall i$, $f(y)$ is unimprovably upper bounded by:

\begin{equation}\label{eq3.2.5}
	f(y) \leq \sum_{i=1}^n \alpha_i f(x_i) - \frac{\lambda}{2} \sum_{i=1}^{n} \alpha_i ||x_i-y||^2
\end{equation}

\noindent where $y=\sum_{i=1}^{n} \alpha_i x_i$.

\end{lemma}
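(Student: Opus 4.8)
The plan is to anchor the strong-convexity inequality at the barycenter $y=\sum_{i=1}^n \alpha_i x_i$ itself and then average over the sample points, which is the mirror image of the anchoring used in Lemma \ref{state3.1}. Since $\Gamma_f$ is convex and each $x_i\in\Gamma_f$ with $\alpha_i\geq 0$ and $\sum_i\alpha_i=1$, the point $y$ lies in $\Gamma_f$, so a subgradient $\bigtriangledown f(y)$ exists there. First I would reuse the defining inequality of $\lambda$-strong convexity established in the proof of Lemma \ref{state3.1}, but now with $y$ as the reference point, i.e. $\mathbb{B}_f(x_i\|y)\geq \frac{\lambda}{2}\|x_i-y\|^2$, which unfolds into
\begin{equation*}
	f(x_i) \geq f(y) + \langle \bigtriangledown f(y), x_i - y \rangle + \frac{\lambda}{2}\|x_i-y\|^2 \qquad \forall i.
\end{equation*}

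Next I would multiply the $i$-th inequality by the nonnegative weight $\alpha_i$ and sum over $i=1,\ldots,n$. The right-hand side splits into three pieces that simplify using the two constraints defining $y$: the constant term collects to $f(y)\sum_i\alpha_i=f(y)$ because the weights sum to one, and the linear term collapses via $\langle \bigtriangledown f(y), \sum_i\alpha_i x_i - y\rangle = \langle \bigtriangledown f(y), 0\rangle = 0$ precisely because $y$ is the weighted average. This cancellation is what makes $y$ the correct anchor. What survives is
\begin{equation*}
	\sum_{i=1}^n \alpha_i f(x_i) \geq f(y) + \frac{\lambda}{2}\sum_{i=1}^n \alpha_i\|x_i-y\|^2,
\end{equation*}
and rearranging this into the claimed form \eqref{eq3.2.5} is immediate.

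For the unimprovability claim I would argue exactly as in Lemma \ref{state3.1}: every step is an instance of strong convexity, so each holds with equality when $f$ coincides with its own quadratic lower model $f(.)=f(y)+\langle \bigtriangledown f(y),.-y\rangle+\frac{\lambda}{2}\|.-y\|^2$. For such a purely quadratic $f$ the summed bound is tight, so \eqref{eq3.2.5} cannot be sharpened without further assumptions.

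I expect the only delicate point to be the nonsmooth case: since $f$ need not be differentiable, $\bigtriangledown f(y)$ must be read as an arbitrary subgradient, and I would emphasize that the cancellation of the linear term goes through for \emph{any} such subgradient, so no differentiability is actually required (only existence of a subgradient at $y\in\Gamma_f$). A secondary subtlety worth flagging is the orientation of the argument: unlike ordinary Jensen, the bound is obtained by expanding $f$ about the average and bounding each $f(x_i)$ from below, not by bounding $f(y)$ from above directly; getting this direction right is exactly what turns the strong-convexity surplus $\frac{\lambda}{2}\|x_i-y\|^2$ into a subtracted term on the correct side of \eqref{eq3.2.5}.
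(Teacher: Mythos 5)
Your proof is correct and follows essentially the same route as the paper's: both anchor the strong-convexity (Bregman divergence) inequality $\mathbb{B}_f(x_i\|y)\geq\frac{\lambda}{2}\|x_i-y\|^2$ at the barycenter $y$, take the $\alpha_i$-weighted sum so the subgradient term $\langle\bigtriangledown f(y),\sum_i\alpha_i x_i-y\rangle$ vanishes, and establish unimprovability by exhibiting the same quadratic function $\frac{\lambda}{2}\|.-y\|^2+\langle c_1,.\rangle+c_2$ as the equality case. Your explicit remark that any subgradient suffices in the nonsmooth case is a sound clarification of a point the paper leaves implicit.
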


\begin{proof}

By strong convexity of $f(.)$ we have:

\begin{flalign*}
	&& \mathbb{B}_f(x_i||y) &\geq \frac{\lambda}{2} ||x_i-y||^2 &&\\
	&\Longrightarrow& f(y) &\leq f(x_i) -\langle \bigtriangledown f(y) , x_i-y \rangle - \frac{\lambda}{2} ||x_i-y||^2 &&\\
	&\Longrightarrow& f(y) &\leq \sum_{i=1}^n \alpha_i f(x_i) - \langle \bigtriangledown f(y), \sum_{i=1}^n \alpha_i x_i-y \rangle - \frac{\lambda}{2} \sum_{i=1}^{n} \alpha_i || x_i-y ||^2 &&\\
	&&	&\leq \sum_{i=1}^n \alpha_i f(x_i) - \frac{\lambda}{2} \sum_{i=1}^{n} \alpha_i ||x_i-y||^2 &&
\end{flalign*}

Both sides of all above inequalities become equal if $f(.)=\frac{\lambda}{2} ||.-y||^2 + \langle c_1,. \rangle + c_2$, where $c_1$ and $c_2$ are constants, so this bound cannot be improved without any extra condition.

\end{proof}

Immediately, the optimal $A$ that yields the lowest upper bound of $f(y)$ can be given by:

\begin{equation}\label{eq3.2.6}
	A = \arg\min_{\substack{\sum_{i=1}^n \alpha_i = 1 \\ \alpha_i \geq 0 \forall i}} \left\{ \sum_{i=1}^n \alpha_i f(x_i) - \frac{\lambda}{2} \sum_{i=1}^{n} \alpha_i ||x_i-\sum_{j=1}^{n} \alpha_j x_j||^2 \right\}
\end{equation}

Combining with \eqref{eq3.1}, \eqref{eq3.1.5}, we have an deterministic upper bound of $S(y)$:

\begin{equation}\label{eq3.2.7}
	S(y) \leq \min_{\substack{\sum_{i=1}^n \alpha_i = 1 \\ \alpha_i \geq 0 \forall i}} \left\{ \sum_{i=1}^n \alpha_i f(x_i) - \frac{\lambda}{2} \sum_{i=1}^{n} \alpha_i ||x_i-\sum_{j=1}^{n} \alpha_j x_j||^2 \right\} - \max_{i=1 \ldots n} p_i(w^*)
\end{equation}

This bound is quite useless at the moment as we are only interested in bounds in stochastic programming. The next lemma will show how it can be generalized in later case.

\begin{lemma}\label{state3.2}

Given $n$ stochastic oracles $\tilde{\Omega}_n=\{\tilde{\omega}(x_1),\ldots,\tilde{\omega}(x_n)\}$ defined by \eqref{eq1.1}, if $y(.,\ldots,.):  \mathcal{H}^n \times \Gamma_f^n \rightarrow \Gamma_f$ and $U(.,\ldots,.): \mathcal{H}^n \times \Gamma_f^n \rightarrow \mathcal{R}$ are functionals of $\tilde{f}_i(.)$ and $x_i$ that satisfy:

\begin{subequations}
\begin{align}
	&U(f,\ldots,f,x_1,\ldots,x_n) \geq S(y(f,\ldots,f,x_1,\ldots,x_n)) \label{eq3.3}\\
	&U(\tilde{f}_1,\ldots,\tilde{f}_n,x_1,\ldots,x_n) \text{ is convex w.r.t. } \tilde{f}_1,\ldots,\tilde{f}_n \label{eq3.4} \\
	&\mathbb{E}[\langle \bigtriangledown_{f,\ldots,f} U(f,\ldots,f,x_1,\ldots,x_n),[\tilde{f}_1-f,\ldots,\tilde{f}_n-f]^T \rangle] \leq 0 \label{eq3.4c}
\end{align}
\end{subequations}

\noindent then $\mathbb{E}[S(y(f,\ldots,f,x_1,\ldots,x_n))]$ is upper bounded by $U(\tilde{f}_1,\ldots,\tilde{f}_n,x_1,\ldots,x_n)$.

\end{lemma}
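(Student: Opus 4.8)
The plan is to treat $U$ as a convex functional of its $n$ function-valued arguments and invoke the supporting-hyperplane (subgradient) inequality, anchored at the deterministic point where every function slot equals the true $f$. Concretely, by the convexity assumption \eqref{eq3.4}, for any realization of the stochastic functions $\tilde{f}_1,\ldots,\tilde{f}_n$ the graph of $U$ lies above its tangent at $(f,\ldots,f)$:
\[
U(\tilde{f}_1,\ldots,\tilde{f}_n,x_1,\ldots,x_n) \geq U(f,\ldots,f,x_1,\ldots,x_n) + \langle \bigtriangledown_{f,\ldots,f} U(f,\ldots,f,x_1,\ldots,x_n),[\tilde{f}_1-f,\ldots,\tilde{f}_n-f]^T \rangle .
\]
This is the single structural fact I would extract from convexity; everything else is bookkeeping.

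Next I would take expectations over the oracle randomness on both sides. The first term on the right, $U(f,\ldots,f,x_1,\ldots,x_n)$, carries no randomness in its function slots, so it survives unchanged, while the expected inner product is exactly the quantity controlled by \eqref{eq3.4c} and is therefore non-positive. Discarding that term gives $\mathbb{E}[U(\tilde{f}_1,\ldots,\tilde{f}_n,x_1,\ldots,x_n)] \geq U(f,\ldots,f,x_1,\ldots,x_n)$. Finally I would chain this with the deterministic majorization \eqref{eq3.3}, which guarantees $U(f,\ldots,f,x_1,\ldots,x_n) \geq S(y(f,\ldots,f,x_1,\ldots,x_n))$, and read the conclusion in expectation: since $S(y(f,\ldots,f,x_1,\ldots,x_n))$ is deterministic once the $x_i$ are fixed, we obtain $\mathbb{E}[S(y(f,\ldots,f,x_1,\ldots,x_n))] \leq \mathbb{E}[U(\tilde{f}_1,\ldots,\tilde{f}_n,x_1,\ldots,x_n)]$, which is the asserted bound.

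The main obstacle I anticipate is not the algebra but justifying the tangent inequality rigorously in the function space $\mathcal{H}^n$: I would need $U$ to be Gateaux differentiable at $(f,\ldots,f)$ so that $\bigtriangledown_{f,\ldots,f} U$ is a well-defined element pairing with $[\tilde{f}_i-f]$, or else replace the gradient by a genuine subgradient and argue the subgradient inequality for convex functionals directly. A second delicate point is the status of the $x_i$, which are generated adaptively from the oracle and hence random; I would have to make explicit whether the expectation in \eqref{eq3.4c} and in the conclusion is taken conditionally on the $x_i$ or jointly, and confirm that \eqref{eq3.4c} is stated so as to absorb this coupling. Clarifying the measurability and the in-expectation reading of the conclusion is where the real care is required; the convexity step itself is immediate.
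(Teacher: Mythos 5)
Your proposal is correct and follows essentially the same route as the paper's own proof: the paper introduces $\delta_i = \tilde{f}_i - f$, applies the same first-order (tangent) convexity inequality at $(f,\ldots,f)$, discards the expected inner-product term via \eqref{eq3.4c}, and chains with \eqref{eq3.3}, exactly as you do modulo the order of taking expectations. Your closing caveats (Gateaux differentiability versus a genuine subgradient, and conditional versus joint expectation over the adaptively chosen $x_i$) flag real imprecisions that the paper itself glosses over, but they do not change the structure of the argument.
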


\begin{proof}

Assuming that $\delta_i(.): \Gamma_f \rightarrow \mathcal{R}$ are perturbation functions defined by

\begin{equation}\label{eq3.3.2}
	\delta_i(.)=\tilde{f}_i(.)-f(.)
\end{equation}

\noindent we have:

\begin{align}
	U(\tilde{f}_{1,\ldots,n},x_{1,\ldots,n}) &\geq U(f+\delta_1,\ldots,f + \delta_n,x_{1,\ldots,n}) \nonumber \\
	\text{(by \eqref{eq3.4})} &= U(f,\ldots,f,x_{1,\ldots,n}) + \langle \bigtriangledown_{f,\ldots,f} U(f,\ldots,f,x_{1,\ldots,n}),[\delta_{1,\ldots_n}]^T \rangle \nonumber \\
	\text{(by \eqref{eq3.3})} &\geq S(y(f,\ldots,f,x_{1,\ldots,n})) + \langle \bigtriangledown_{f,\ldots,f} U(f,\ldots,f,x_{1,\ldots,n}),[\delta_{1,\ldots,n}]^T \rangle \label{eq3.3.3}
\end{align}

Moving $\delta_i$ to the left side:

\begin{align*}
	\mathbb{E}[S(y(f,\ldots,f,x_{1,\ldots,n}))] &\leq U(\tilde{f}_{1,\ldots,n},x_{1,\ldots,n}) + \mathbb{E}[\langle \bigtriangledown_{f,\ldots,f} U(f,\ldots,f,x_{1,\ldots,n}),[\delta_{1,\ldots,n}]^T \rangle] \\
	\text{(by \eqref{eq3.4c})}	&\leq U(\tilde{f}_{1,\ldots,n},x_{1,\ldots,n})
\end{align*}

\end{proof}

Clearly, according to \eqref{eq3.4}, setting:

\begin{equation}\label{eq3.3.1}
	U(\tilde{f}_{1,\ldots,n},x_{1,\ldots,n}) = \min_{\substack{\sum_{i=1}^n \alpha_i = 1 \\ \alpha_i \geq 0 \forall i}} \left\{ \sum_{i=1}^n \alpha_i \tilde{f}_i(x_i) - \frac{\lambda}{2} \sum_{i=1}^{n} \alpha_i ||x_i-\sum_{j=1}^{n} \alpha_j x_j||^2 \right\} - \max_{i=1 \ldots n} \tilde{p}_i(w^*)
\end{equation}

\noindent by substituting $f(.)$ and $p_i(.)$ in \eqref{eq3.2.7} respectively with $\tilde{f}_i(.)$ defined by \eqref{eq1.2} and $\tilde{p}_i(.): \Gamma_f \rightarrow \mathcal{R}$ defined by:

\begin{equation}\label{eq3.3.5}
	\tilde{p}_i(.)=\tilde{f}_i(x_i)+\langle \bigtriangledown \tilde{f}_i(x_i), .-x_i \rangle+\frac{\lambda}{2} ||.-x_i||^2
\end{equation}

\noindent is not an option, because $\min_{\substack{\sum_{i=1}^n \alpha_i = 1 \\ \alpha_i \geq 0 \forall i}} \{.\}$ and $- \max_{i=1 \ldots n} \{.\}$ are both concave, $\sum_{i=1}^n \alpha_i \tilde{f}_i(x_i)$ and $\tilde{p}_i(w^*)$ are both linear to $\tilde{f}_i(.)$, and $\frac{\lambda}{2} \sum_{i=1}^{n} \alpha_i ||x_i-\sum_{j=1}^{n} \alpha_j x_j||^2$ is irrelevant to $\tilde{f}_i(.)$. This prevents asymptotically fast cutting-plane/bundle methods \cite{joachims2006cuttingplane,teo2009bundle,franc2009optimized} from being directly applied on stochastic oracles without any loss of performance. As a result, to decrease \eqref{eq3.1} and satisfy \eqref{eq3.4} our options boil down to replacing $\min_{\substack{\sum_{i=1}^n \alpha_i = 1 \\ \alpha_i \geq 0 \forall i}} \{.\}$ and $- \max_{i=1 \ldots n} \{.\}$ in \eqref{eq3.3.1} with their respective lowest convex upper bound:

\begin{align}
	U_{(A,B)} (\tilde{f}_{1,\ldots,n},x_{1,\ldots,n}) &= \sum_{i=1}^n \alpha_i \tilde{f}_i(x_i) - \frac{\lambda}{2} \sum_{i=1}^{n} \alpha_i ||x_i-\sum_{j=1}^{n} \alpha_j x_j||^2 - \sum_{i=1}^{n} \beta_i \tilde{p}_i(w^*) \nonumber \\
		&= \sum_{i=1}^n \alpha_i \tilde{f}_i(x_i) - \frac{\lambda}{2} \sum_{i=1}^{n} \alpha_i ||x_i-\sum_{j=1}^{n} \alpha_j x_j||^2 - \tilde{P}_n(w^*) \label{eq3.8}
\end{align}

\noindent where $\tilde{P}_n(.): \Gamma_f \rightarrow \mathcal{R}$ is defined by:

\begin{equation}\label{eq3.5.5}
	\tilde{P}_n(.) = \sum_{i=1}^{n} \beta_i \tilde{p}_i(.)
\end{equation}

\noindent and $A=[\alpha_1,\ldots,\alpha_n]^T$, $B=[\beta_1,\ldots,\beta_n]^T$ are constant $n$-dimensional vectors, with each $\alpha_i$, $\beta_i$ satisfying:

\begin{subequations}\label{eq3.6}
\begin{align}
	\sum_{i=1}^n \alpha_i &= 1 & \alpha_i &\geq 0 \quad \forall i\\
	\sum_{i=1}^n \beta_i &= 1 & \beta_i &\geq 0 \quad \forall i
\end{align}
\end{subequations}

\noindent accordingly $y((\tilde{f}_1,\ldots,\tilde{f}_n,x_1,\ldots,x_n)$ can be set to:

\begin{equation}\label{eq3.8.5}
	y_{(A,B)}(x_1,\ldots,x_n) = \sum_{i=1}^n \alpha_i x_i
\end{equation}

\noindent such that \eqref{eq3.3} is guaranteed by lemma \ref{state3.1.2}. It should be noted that $A$ and $B$ must both be constant vectors that are independent from all stochastic variables, otherwise the convexity condition \eqref{eq3.4} may be lost. For example, if we always set $\beta_i$ as the solution of the following problem:

\begin{equation*}
	\beta_i=\arg \max_{\substack{\sum_{i=1}^n \beta_i = 1 \\ \beta_i \geq 0 \forall i}} \left\{ \sum_{i=1}^{n} \beta_i \tilde{p}_i(w^*) \right\}
\end{equation*}

\noindent then $\tilde{P}_n(w^*)$ will be no different from the cutting-plane bound \eqref{eq3.1.5}. Finally, \eqref{eq3.4c} can be validated directly by substituting \eqref{eq3.8} back into \eqref{eq3.4c}:

\begin{align}\label{eq3.9}
	\langle \bigtriangledown_{f,\ldots,f} U_{(A,B)}(f,\ldots,f,x_{1,\ldots,n}),[\delta_{1,\ldots,n}]^T \rangle = \sum_{i=1}^{n} \left[ (\alpha_i-\beta_i) \delta_i(x_i) - \langle \bigtriangledown \delta_i(x_i), \beta_i (w^*-x_i) \rangle \right]
\end{align}

Clearly $\mathbb{E}[(\alpha_i-\beta_i) \delta_i(x_i)] = 0$ and $\mathbb{E}[\langle \bigtriangledown \delta_i(x_i), w^* \rangle] = 0$ can be easily satisfied because $\alpha_i$ and $\beta_i$ are already set to constants to enforce \eqref{eq3.4}, and by definition $w^*=\arg\min f(.)$ is a deterministic (yet unknown) variable in our problem setting, while both $\delta_i(x_i)$ and $\bigtriangledown \delta_i(x_i)$ are unbiased according to \eqref{eq1.2a}. Bounding $\mathbb{E}[\langle \bigtriangledown \delta_i(x_i), x_i \rangle]$ is a bit harder but still possible: In all optimization algorithms, each $x_i$ can either be a constant, or chosen from $\Gamma_f$ based on previous $\tilde{f}_1(.),\ldots,\tilde{f}_{i-1}(.)$ ($x_i$ cannot be based on $\tilde{f}_i(.)$ that is still unknown by the time $x_i$ is chosen). By the i.i.d. condition \eqref{eq1.2b}, they are all independent from $\tilde{f}_i(.)$, which implies that $x_i$ is also independent from $\tilde{f}_i(x_i)$:

\begin{equation}\label{eq3.9.1}
	\mathbb{E}[ \langle \bigtriangledown \delta_i(x_i), x_i \rangle ] = 0
\end{equation}

As a result, we conclude that \eqref{eq3.9} satisfies $\mathbb{E}[\langle \bigtriangledown_{f,\ldots,f} U_{(A,B)}(f,\ldots,f,x_{1,\ldots,n}),[\delta_{1,\ldots,n}]^T \rangle] = 0$, and subsequently $U_{(A,B)}$ defined by \eqref{eq3.8} satisfies all three conditions of Lemma \ref{state3.2}. At this point we may construct an algorithm that uniformly reduces $\max_{w^*} U_{(A,B)}$ by iteratively calling new stochastic oracles and updating $A$ and $B$. Our main result is summarized in the following theorem:

\begin{theorem}\label{state3.4}

For all $\lambda$-strongly convex function $F(.)$, assuming that at some stage of an algorithm, $n$ stochastic oracles $\tilde{\omega}(x_1),\ldots,\tilde{\omega}(x_n)$ have been called to yield a point $y_{(A^n,B^n)}$ defined by \eqref{eq3.8.5} and an upper bound $\hat{U}_{(A^n,B^n)}$ defined by:

\begin{align}
	\hat{U}_{(A^n,B^n)} (\tilde{f}_{1,\ldots,n},x_{1,\ldots,n}) &= U_{(A^n,B^n)} (\tilde{f}_{1,\ldots,n},x_{1,\ldots,n}) + \frac{\lambda}{2} \sum_{i=1}^{n} \alpha_i ||x_i-\sum_{j=1}^{n} \alpha_j x_j||^2 +\left( \tilde{P}_n(w^*) - \min \tilde{P}_n(.) \right) \nonumber \\
		&= \sum_{i=1}^n \alpha_i \tilde{f}_i(x_i) - \min \tilde{P}_n(.) \label{eq3.5}
\end{align}

\noindent where $A^n$ and $B^n$ are constant vectors satisfying \eqref{eq3.6} (here $^n$ in $A^n$ and $B^n$ denote superscripts, should not be confused with exponential index), if the algorithm calls another stochastic oracle $\tilde{\omega}(x_{n+1})$ at a new point $x_{n+1}$ given by:

\begin{align*}
	x_{n+1} &=\arg\min \tilde{P}_n(.)
\end{align*}

\noindent and update $A$ and $B$ by:

\begin{subequations}\label{eq3.10}
\begin{align}
	A^{n+1} &=\left[ \left( 1-\frac{\lambda}{G^2} \hat{U}_{(A^n,B^n)} \right) (A^n)^T , \frac{\lambda}{G^2} \hat{U}_{(A^n,B^n)} \right]^T \\
	B^{n+1} &=\left[ \left( 1-\frac{\lambda}{G^2} \hat{U}_{(A^n,B^n)} \right) (B^n)^T , \frac{\lambda}{G^2} \hat{U}_{(A^n,B^n)} \right]^T
\end{align}
\end{subequations}
 
\noindent where $G=\max ||\bigtriangledown \tilde{f}_i(.)||$, then $\hat{U}_{A^{n+1},B^{n+1}}$ is bounded by:

\begin{equation}\label{eq3.11}
	\hat{U}_{(A^{n+1},B^{n+1})} \leq \hat{U}_{(A^n,B^n)}-\frac{\lambda}{2 G^2} \hat{U}_{(A^n,B^n)}^2
\end{equation}

\end{theorem}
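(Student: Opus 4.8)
The plan is to expand $\hat{U}_{(A^{n+1},B^{n+1})}$ directly from its definition \eqref{eq3.5} and to exploit the fact that the update \eqref{eq3.10} scales every old weight by the common factor $1-\tau$ and appends a single new weight $\tau := \frac{\lambda}{G^2}\hat{U}_{(A^n,B^n)}$ to both $A$ and $B$. First I would observe that this makes the new objective term split as $\sum_{i=1}^{n+1}\alpha_i^{n+1}\tilde{f}_i(x_i) = (1-\tau)\sum_{i=1}^n \alpha_i^n \tilde{f}_i(x_i) + \tau\tilde{f}_{n+1}(x_{n+1})$, and the new prox-bundle \eqref{eq3.5.5} split as $\tilde{P}_{n+1}(.) = (1-\tau)\tilde{P}_n(.) + \tau\tilde{p}_{n+1}(.)$. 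Provided $\tau\in[0,1]$ (which keeps both weight vectors inside the simplex \eqref{eq3.6}), $\tilde{P}_{n+1}$ remains a convex combination of $\lambda$-strongly convex prox-functions and is therefore itself $\lambda$-strongly convex.

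The crux of the argument is a sharp lower bound on $\min\tilde{P}_{n+1}(.)$. Because $\tilde{P}_n$ is $\lambda$-strongly convex with minimizer $x_{n+1}=\arg\min\tilde{P}_n(.)$, strong convexity gives $\tilde{P}_n(w)\geq \min\tilde{P}_n(.) + \frac{\lambda}{2}||w-x_{n+1}||^2$ for every $w$. Substituting this, together with the explicit form of $\tilde{p}_{n+1}$ from \eqref{eq3.3.5}, into the convex combination and collecting the two quadratic pieces yields

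\begin{equation*}
	\tilde{P}_{n+1}(w) \geq (1-\tau)\min\tilde{P}_n(.) + \tau\tilde{f}_{n+1}(x_{n+1}) + \frac{\lambda}{2}||w-x_{n+1}||^2 + \tau\langle\bigtriangledown\tilde{f}_{n+1}(x_{n+1}), w-x_{n+1}\rangle .
\end{equation*}

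Minimizing the right-hand side over $w$ is an elementary quadratic minimization in the displacement $w-x_{n+1}$: the optimum sits at $w-x_{n+1}=-\frac{\tau}{\lambda}\bigtriangledown\tilde{f}_{n+1}(x_{n+1})$ and contributes $-\frac{\tau^2}{2\lambda}||\bigtriangledown\tilde{f}_{n+1}(x_{n+1})||^2$, so that $\min\tilde{P}_{n+1}(.) \geq (1-\tau)\min\tilde{P}_n(.) + \tau\tilde{f}_{n+1}(x_{n+1}) - \frac{\tau^2}{2\lambda}||\bigtriangledown\tilde{f}_{n+1}(x_{n+1})||^2$.

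Finally I would assemble $\hat{U}_{(A^{n+1},B^{n+1})}$ by subtracting this bound on $\min\tilde{P}_{n+1}$ from the split objective term; the two copies of $\tau\tilde{f}_{n+1}(x_{n+1})$ cancel exactly, leaving $\hat{U}_{(A^{n+1},B^{n+1})} \leq (1-\tau)\hat{U}_{(A^n,B^n)} + \frac{\tau^2}{2\lambda}||\bigtriangledown\tilde{f}_{n+1}(x_{n+1})||^2$. Using $||\bigtriangledown\tilde{f}_{n+1}(x_{n+1})||\leq G$ and then inserting $\tau=\frac{\lambda}{G^2}\hat{U}_{(A^n,B^n)}$ collapses the right-hand side to $\hat{U}_{(A^n,B^n)} - \frac{\lambda}{G^2}\hat{U}_{(A^n,B^n)}^2 + \frac{\lambda}{2G^2}\hat{U}_{(A^n,B^n)}^2$, which is exactly \eqref{eq3.11}. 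I expect the main obstacle to be the lower bound on $\min\tilde{P}_{n+1}$: one must invoke $\lambda$-strong convexity of the \emph{aggregate} $\tilde{P}_n$ (not merely of the individual prox-functions) in order to anchor the quadratic at $x_{n+1}$, and then carry the gradient cross-term cleanly through the minimization without dropping the $G^2$ normalization, since it is precisely this normalization that forces the step size $\tau$ to cancel exactly one half of the leading $\hat{U}^2$ term.
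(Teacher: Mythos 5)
Your proof is correct and takes essentially the same route as the paper's: the same split of the objective term and prox-bundle under the common scaling update, the same strong-convexity lower bound on $\min \tilde{P}_{n+1}(.)$ anchored at $x_{n+1}=\arg\min \tilde{P}_n(.)$ (the paper writes it as $\min \tilde{P}_{n+1}(.) \geq \tilde{P}_{n+1}(\tilde{x}^*_n) - \frac{1}{2\lambda} ||\bigtriangledown \tilde{P}_{n+1}(\tilde{x}^*_n)||^2$, which coincides with your explicit quadratic minimization because $\bigtriangledown \tilde{P}_n(\tilde{x}^*_n)=0$), the same cancellation of the two $\tau \tilde{f}_{n+1}(x_{n+1})$ terms, and the same final algebra. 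The only cosmetic difference is that the paper derives the step size $\frac{\lambda}{G^2}\hat{U}_{(A^n,B^n)}$ by minimizing the right-hand side of its recursion over the new weight, whereas you take the prescribed step size from \eqref{eq3.10} and verify that it produces \eqref{eq3.11}.
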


\begin{proof}

First, optimizing and caching all elements of $A^n$ or $B^n$ takes at least $O(n)$ time and space, which is not possible in large scale problems. So we confine our options of $A^{n+1}$ and $B^{n+1}$ to setting:

\begin{subequations}\label{eq3.11.5}
\begin{align}
	[\alpha^{n+1}_1,\ldots,\alpha^{n+1}_n]^T &=(1-\alpha^{n+1}_{n+1}) [\alpha^n_1,\ldots,\alpha^n_n]^T\\
	[\beta^{n+1}_1,\ldots,\beta^{n+1}_n]^T &=(1-\beta^{n+1}_{n+1}) [\beta^n_1,\ldots,\beta^n_n]^T
\end{align}
\end{subequations}

\noindent such that previous $\sum_{i=1}^n \alpha_i \tilde{F}(x_i)$ and $\sum_{i=1}^n \beta_i \tilde{p_i}(.)$ can be summed up in previous iterations in order to produce a $1$-memory algorithm instead of an $\infty$-memory one, without violating \eqref{eq3.6}. Consequently $\hat{U}_{(A^{n+1},B^{n+1})}$ can be decomposed into:

\begin{align*}
	\hat{U}_{(A^{n+1},B^{n+1})} &= \sum_{i=1}^{n+1} \alpha^{n+1}_i \tilde{f}(x_i) - \min \tilde{P}_{n+1}(.) \\
	\text{(by \eqref{eq3.3.5}, \eqref{eq3.5.5}, \eqref{eq3.6})}	&\leq \sum_{i=1}^{n+1} \alpha^{n+1}_i \tilde{f}(x_i) - \left[ \sum_{i=1}^{n+1} \beta^{n+1}_i \tilde{p}_i(\tilde{x}^*_n) - \frac{1}{2 \lambda} ||\bigtriangledown \sum_{i=1}^{n+1} \beta^{n+1}_i \tilde{p}_i(\tilde{x}^*_n)||^2 \right] \\
	\text{(by \eqref{eq3.11.5})}	&= \left[ (1-\alpha^{n+1}_{n+1}) \sum_{i=1}^{n} \alpha^{n}_i \tilde{f}(x_i) - (1-\beta^{n+1}_{n+1}) \sum_{i=1}^{n} \beta^{n}_i \tilde{p}_i(\tilde{x}^*_n) \right]\\
		&\quad + \left[ \alpha^{n+1}_{n+1} \tilde{f}(x_{n+1}) - \beta^{n+1}_{n+1} \tilde{p}_{n+1}(\tilde{x}^*_n) \right] + \frac{(\beta^{n+1}_{n+1})^2}{2 \lambda} ||\bigtriangledown \tilde{p}_{n+1}(\tilde{x}^*_n)||^2
\end{align*}

\noindent where $\tilde{x}^*_n=\arg\min \tilde{P}_n(.)$, setting $a^{n+1}_{n+1}=b^{n+1}_{n+1}$ and $x_{n+1}=\tilde{x}^*_n$ eliminates the second term:

\begin{align}
	\hat{U}_{(A^{n+1},B^{n+1})} &= (1-\alpha^{n+1}_{n+1}) \left( \sum_{i=1}^{n} \alpha^{n}_i \tilde{f}(x_i) - \tilde{P}_n(\tilde{x}^*_n) \right) + \frac{(\alpha^{n+1}_{n+1})^2}{2 \lambda} ||\bigtriangledown \tilde{p}_{n+1}(x_{n+1})||^2 \nonumber \\
	\text{(by \eqref{eq3.3.5})}	&= (1-\alpha^{n+1}_{n+1}) \hat{U}_{(A^n,B^n)} + \frac{(\alpha^{n+1}_{n+1})^2}{2 \lambda} ||\bigtriangledown \tilde{f}_{n+1}(x_{n+1})||^2 \nonumber \\
	\text{($G \geq ||\bigtriangledown \tilde{f}_i(.)||$)} &\leq (1-\alpha^{n+1}_{n+1}) \hat{U}_{(A^n,B^n)} + \frac{(\alpha^{n+1}_{n+1})^2 G^2}{2 \lambda} \label{eq3.12}
\end{align}

Let $u_i=\frac{2 \lambda}{G^2} \hat{U}_{(A^i,B^i)}$, minimizing the right side of \eqref{eq3.12} over $\alpha^{n+1}_{n+1}$ yields:

\begin{equation}\label{eq3.12.5}
	\alpha^{n+1}_{n+1} = \arg\min_{\alpha} \{ (1-\alpha) u_n + \alpha^2 \} = \frac{u_n}{2} =\frac{\lambda}{G^2} \hat{U}_{(A^n,B^n)}
\end{equation}

In this case:

\begin{flalign}\label{eq3.13}
	&& u_{n+1} &\leq u_n-\frac{u_n^2}{4} &&\\
	&\Longrightarrow& \hat{U}_{(A^{n+1},B^{n+1})} &\leq \hat{U}_{(A^n,B^n)}-\frac{2 \lambda}{G^2} \hat{U}^2_{(A^n,B^n)} && \nonumber
\end{flalign}

\end{proof}

Given an arbitrary initial oracle $\tilde{\omega}(x_1)$ and apply the updating rule in theorem \ref{state3.4} recursively results in algorithm 1, accordingly we can prove its asymptotic behavior by induction:

\begin{corollary}\label{state3.5}

The final point $y_n$ obtained by applying algorithm 1 on arbitrary $\lambda$-strongly convex function $f(.)$ has the following worst-case rate of convergence:

\begin{equation*}
	\mathbb{E}[f(y_n)]-\min f(.) \leq \frac{2 G^2}{\lambda (n+3)}
\end{equation*}

\end{corollary}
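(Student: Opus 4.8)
The plan is to collapse the corollary onto a single scalar recursion and then prove a closed-form bound by induction on $n$. Theorem \ref{state3.4} shows that the normalized quantity $u_n = \frac{2\lambda}{G^2}\hat{U}_{(A^n,B^n)}$ satisfies $u_{n+1} \leq u_n - \frac{u_n^2}{4}$ for every realization of the oracles, and the pseudocode of Algorithm 1 realizes precisely this update, together with the initialization $u_1 \gets 1$ and the step size $\alpha^{n+1}_{n+1} = u_n/2$ from \eqref{eq3.12.5}. So the whole asymptotic behavior is encoded in this one-dimensional inequality, and the first thing I would do is state the target claim $u_n \leq \frac{4}{n+3}$ for all $n \geq 1$, since this is exactly equivalent to $\hat{U}_{(A^n,B^n)} = \frac{G^2}{2\lambda}u_n \leq \frac{2G^2}{\lambda(n+3)}$.

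Next I would dispatch the base case. For $n=1$ there is a single oracle with $\alpha_1 = 1$ and $\tilde{P}_1 = \tilde{p}_1$, so a direct computation of $\min \tilde{p}_1(.)$ gives $\hat{U}_{(A^1,B^1)} = \frac{1}{2\lambda}\|\bigtriangledown\tilde{f}_1(x_1)\|^2 \leq \frac{G^2}{2\lambda}$, whence $u_1 \leq 1 = \frac{4}{1+3}$; this is consistent with the algorithm setting $u_1 \gets 1$ as the worst case. For the inductive step I would exploit that $g(u) = u - \frac{u^2}{4}$ is increasing on $[0,2]$ (as $g'(u) = 1 - u/2 \geq 0$ there) and that $\frac{4}{n+3} \leq 2$ for all $n \geq 1$. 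Monotonicity then yields $u_{n+1} \leq g(u_n) \leq g\!\left(\frac{4}{n+3}\right) = \frac{4(n+2)}{(n+3)^2}$, and the elementary inequality $(n+2)(n+4) \leq (n+3)^2$, i.e. $8 \leq 9$, upgrades this to $u_{n+1} \leq \frac{4}{n+4}$, closing the induction.

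Having established $\hat{U}_{(A^n,B^n)} \leq \frac{2G^2}{\lambda(n+3)}$ pathwise, I would finish by connecting $\hat{U}$ to the true suboptimality. Since the two correction terms added in \eqref{eq3.5} are nonnegative, $\hat{U}_{(A^n,B^n)} \geq U_{(A^n,B^n)}$, and $U_{(A^n,B^n)}$ meets all three hypotheses of Lemma \ref{state3.2}; therefore $\mathbb{E}[f(y_n)] - \min f(.) = \mathbb{E}[S(y_n)] \leq \mathbb{E}[U_{(A^n,B^n)}] \leq \mathbb{E}[\hat{U}_{(A^n,B^n)}] \leq \frac{2G^2}{\lambda(n+3)}$, which is the claim.

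The induction itself is routine; the step I expect to require the most care is reconciling the pathwise recursion with the expectation bound of Lemma \ref{state3.2}. The step size $\alpha^{n+1}_{n+1} = \frac{\lambda}{G^2}\hat{U}_{(A^n,B^n)}$ is a random quantity built from the already-observed oracles, so the recursion $u_{n+1} \leq u_n - \frac{u_n^2}{4}$ must be read as holding for every realization. What makes the argument clean is precisely that the resulting bound $\frac{2G^2}{\lambda(n+3)}$ is \emph{deterministic}: it holds for every sample path, so passing to the expectation in the final chain above is immediate and introduces no circularity between the data-dependent choice of $\alpha^{n+1}_{n+1}$ and the unbiasedness conditions \eqref{eq1.2a}--\eqref{eq1.2b} used to validate Lemma \ref{state3.2}.
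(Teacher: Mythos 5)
Your proposal is correct and takes essentially the same route as the paper: bound the initial condition by $\hat{U} \leq \frac{G^2}{2\lambda}$ (i.e.\ $u_1 \leq 1$), unroll the recursion $u_{n+1} \leq u_n - \frac{u_n^2}{4}$ from Theorem~\ref{state3.4} to obtain $u_n \leq \frac{4}{n+3}$, and pass back to $\mathbb{E}[S(y_n)]$ via Lemma~\ref{state3.2} together with the nonnegativity of the two correction terms in \eqref{eq3.5}. The only cosmetic difference is how the recursion is unrolled: the paper telescopes reciprocals, $\frac{1}{u_{n+1}} \geq \frac{1}{u_n} + \frac{1}{4}$ as in \eqref{eq3.14}, whereas you induct using monotonicity of $u \mapsto u - \frac{u^2}{4}$ on $[0,2]$ and the inequality $(n+2)(n+4) \leq (n+3)^2$ --- the two are equivalent.
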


\begin{proof}

First, by \eqref{eq3.13} we have:

\begin{equation}\label{eq3.14}
	\frac{1}{u_{n+1}} \geq \frac{1}{u_n \left( 1-\frac{u_n}{4} \right) } = \frac{1}{u_n} + \frac{1}{4-u_n} \geq \frac{1}{u_n} + \frac{1}{4}
\end{equation}

On the other hand, by strong convexity, for all $x_1 \in \Gamma_f$ we have:

\begin{equation}\label{eq3.15}
	f(x_1)-\min f(.) \leq \frac{1}{2 \lambda} ||\bigtriangledown f(x_1)||^2 \leq \frac{G^2}{2 \lambda}
\end{equation}

Setting $\hat{U}_{(1,1)}=\frac{G^2}{2 \lambda}$ as intial condition and apply \eqref{eq3.14} recursively induces the following generative function:

\begin{flalign*}
	&& \frac{1}{u_n} &\geq 1+\frac{n-1}{4}=\frac{n+3}{4} && \\
	&\Longrightarrow& u_n &\leq \frac{4}{n+3} &&\\
	&\Longrightarrow& \hat{U}_{(A^n,B^n)} &\leq \frac{2 G^2}{\lambda(n+3)}\\
	&\Longrightarrow& \mathbb{E}[f(y_n)]-\min f(.) &\leq \frac{2 G^2}{\lambda(n+3)} - \frac{\lambda}{2} \sum_{i=1}^{n} \alpha^{n}_i ||x_i-y_n||^2 - \left( \tilde{P}_n(w^*) - \min \tilde{P}_n(.) \right)
\end{flalign*}

\end{proof}

This worst-case rate of convergence is four times faster than Epoch-GD ($\frac{8 G^2}{\lambda n}$) \cite{hazan2011epochGD,Juditski2011uniform} or Cutting-plane/Bundle Method ($\frac{8 G^2}{ \lambda \left[ n+2-\log_{2} \left( \frac{\lambda f(x_1)}{4 G^2} \right) \right] }$) \cite{joachims2006cuttingplane,teo2009bundle,franc2009optimized}, and is indefinitely faster than SGD ($\frac{ln(n) G^2}{2 \lambda n}$) \cite{bottou2008SGD,shalev2007pegasos}.

\section{High Probability Bound}

An immediate result of Corollary \ref{state3.5} is the following high probability bound yielded by Markov inequality:

\begin{equation}\label{eq5.0}
	\mathrm{Pr} \left( S(y_n) \geq \frac{2 G^2}{\lambda (n+3) \eta} \right) \leq \eta
\end{equation}

where $1-\eta \in [0,1]$ denotes the confidence of the result $y_n$ to reach the desired suboptimality. In most cases (particularly when $\eta \approx 0$, as demanded by most applications) this bound is very loose and cannot demonstrate the true performance of the proposed algorithm. In this section we derive several high probability bounds that are much less sensitive to small $\eta$ comparing to \eqref{eq5.0}. 

\begin{corollary}

The final point $y_n$ obtained by applying algorithm 1 on arbitrary $\lambda$-strongly convex function $F(.)$ has the following high probability bounds:

\begin{subequations}\label{eq5.0.1}
\begin{align}
	\mathrm{Pr} \left( S(y_n) \geq t+\frac{2 G^2}{\lambda (n+3)} \right) &\leq \exp \left\{ -\frac{t^2 (n+2)}{16 D^2 \sigma^2} \right\} \\
	\mathrm{Pr} \left( S(y_n) \geq t+\frac{2 G^2}{\lambda (n+3)} \right) &\leq \frac{1}{2} \exp \left\{ - \frac{t^2 (n+2)}{8 \tilde{G}^2 D^2} \right\} \label{eq5.0.1b}\\
	\mathrm{Pr} \left( S(y_n) \geq t+\frac{2 G^2}{\lambda (n+3)} \right) &\leq \exp \left\{ -\frac{t (n+2)}{4 \tilde{G} D} \mathrm{ln} \left( 1+ \frac{t \tilde{G}}{2 D \sigma^2} \right) \right\} \label{eq5.0.1c}
\end{align}
\end{subequations}

where constants $\tilde{G}=\max ||\bigtriangledown \delta_i(.)||$, $\sigma^2=\max \mathrm{Var} ( \bigtriangledown \tilde{f}_i(.) )$ are maximal range and variance of each stochastic subgradient respectively, and $D=\max_{x_1,x_2 \in \Gamma_f} ||x_1-x_2||$ is the largest distance between two points in $\Gamma_f$.

\end{corollary}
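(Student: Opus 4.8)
\noindent The plan is to turn the expectation bound of Corollary~\ref{state3.5} into a tail bound by isolating the single source of randomness that Lemma~\ref{state3.2} discards when it passes to expectations. Recall that inequality \eqref{eq3.3.3}, \emph{before} the expectation is taken, holds pathwise, and that the algorithm always sets $\alpha_i=\beta_i$, so that the first summand of \eqref{eq3.9} vanishes. Hence for every realization of the oracles
\begin{equation*}
	S(y_n) \leq \hat{U}_{(A^n,B^n)} + \sum_{i=1}^n Z_i, \qquad Z_i := \langle \bigtriangledown \delta_i(x_i), \beta_i(w^*-x_i) \rangle .
\end{equation*}
Since the recursion \eqref{eq3.13} and the initial condition $\hat{U}_{(1,1)}=\frac{G^2}{2\lambda}$ are pathwise statements, Corollary~\ref{state3.5} in fact delivers the deterministic bound $\hat{U}_{(A^n,B^n)}\le \frac{2G^2}{\lambda(n+3)}$. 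Therefore $\{S(y_n)\ge t+\frac{2G^2}{\lambda(n+3)}\}\subseteq\{\sum_i Z_i\ge t\}$, and all three claims reduce to upper-tail bounds on the single sum $\sum_{i=1}^n Z_i$.

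\noindent Next I would verify that $\{Z_i\}$ is a martingale difference sequence with respect to the filtration $\mathcal{F}_i=\sigma(\tilde{\omega}(x_1),\ldots,\tilde{\omega}(x_i))$. The weights $\beta_i$ are deterministic, being generated by the scalar recursion $u_i\gets u_{i-1}-\frac{u_{i-1}^2}{4}$ of Algorithm~1, and $x_i$ is $\mathcal{F}_{i-1}$-measurable, so the same independence argument that produced \eqref{eq3.9.1} gives $\mathbb{E}[Z_i\mid\mathcal{F}_{i-1}]=0$. By Cauchy--Schwarz together with $||\bigtriangledown\delta_i(.)||\le\tilde{G}$ and $||w^*-x_i||\le D$ the increments satisfy $|Z_i|\le \tilde{G}D\beta_i$, while the conditional variance obeys $\mathbb{E}[Z_i^2\mid\mathcal{F}_{i-1}]\le \sigma^2 D^2\beta_i^2$, since the only random factor $\bigtriangledown\delta_i(x_i)$ has variance at most $\sigma^2$ and is projected onto a vector of length at most $D$.

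\noindent The technical heart of the argument is to control the two aggregates $\sum_i\beta_i^2$ (range) and $D^2\sigma^2\sum_i\beta_i^2$ (variance) that drive every martingale inequality. Writing $S_n=\sum_{i=1}^n(\beta_i^n)^2$ and using the shrinkage rule \eqref{eq3.11.5} with $\beta^{n+1}_{n+1}=u_n/2$, the identities $(1-u_n/2)^2=1-u_{n+1}$ and $(u_n/2)^2=u_n-u_{n+1}$ collapse the update into the clean recursion
\begin{equation*}
	S_{n+1} = (1-u_{n+1})\,S_n + (u_n-u_{n+1}),
\end{equation*}
from which, together with the two-sided estimate $\frac{4}{n+3}\le u_n\le\frac{3}{n+2}$ obtained by iterating \eqref{eq3.14}, I would prove by induction that $S_n=O\!\left(\frac{1}{n+2}\right)$. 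I expect this weight-sum estimate to be the main obstacle: the inequality direction in \eqref{eq3.13} and the interplay between the lower and upper bounds on $u_n$ must be handled carefully to land the explicit constant that surfaces in the exponents.

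\noindent Finally, with $\sum_i\beta_i^2=O(1/(n+2))$ in hand, the three bounds follow from three standard martingale tail inequalities applied to $\sum_iZ_i$: a Freedman/Bernstein-type sub-Gaussian bound driven by the conditional-variance sum $D^2\sigma^2\sum_i\beta_i^2$ yields the first estimate; the Azuma--Hoeffding inequality driven by the squared range $\tilde{G}^2D^2\sum_i\beta_i^2$ yields \eqref{eq5.0.1b}; and Bennett's inequality, which simultaneously exploits the variance and the uniform increment bound $\tilde{G}D\beta_i$, produces the sharper logarithmic estimate \eqref{eq5.0.1c}. The constants $16$, $8$ and $4$ simply record the particular version of each inequality together with the explicit value of $\sum_i\beta_i^2$.
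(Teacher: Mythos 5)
Your overall route is the paper's own: use \eqref{eq3.3.3} pathwise with $A^n=B^n$ so that only the noise term $\sum_i \beta^n_i\langle \bigtriangledown \delta_i(x_i),w^*-x_i\rangle$ survives, observe that $\hat{U}_{(A^n,B^n)}\le\frac{2G^2}{\lambda(n+3)}$ holds deterministically (the recursion \eqref{eq3.13} and the initial condition only use $\|\bigtriangledown\tilde{f}_i(.)\|\le G$), bound each increment's range by $\tilde{G}D\beta^n_i$ and its variance by $D^2\sigma^2(\beta^n_i)^2$, and finish with the same three concentration inequalities. This is precisely the paper's \eqref{eq5.1}--\eqref{eq5.4}. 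Your martingale-difference framing is actually more careful than the paper's, which invokes inequalities for sums of independent random variables even though each $x_i$ (hence each increment) depends on the history of the oracles; the martingale forms of Azuma, Freedman and Bennett are the right tools, so that part is an improvement, not a deviation.

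However, the step you yourself call the technical heart has two concrete problems. First, your two-sided estimate $\frac{4}{n+3}\le u_n\le\frac{3}{n+2}$ is written backwards: since $\frac{4}{n+3}>\frac{3}{n+2}$ for all $n\ge2$, no sequence can satisfy it, so as stated the claim is vacuous. From $\frac{1}{u_{n+1}}=\frac{1}{u_n}+\frac{1}{4-u_n}$ and $0<u_n\le1$ one gets $\frac{1}{u_n}+\frac{1}{4}\le\frac{1}{u_{n+1}}\le\frac{1}{u_n}+\frac{1}{3}$, i.e.\ the correct estimate is $\frac{3}{n+2}\le u_n\le\frac{4}{n+3}$. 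With that correction your recursion $S_{n+1}=(1-u_{n+1})S_n+(u_n-u_{n+1})$ (whose two identities are right, because Algorithm 1 updates $u$ by exact equality) does close by induction and gives $S_n\le\frac{3}{n+2}$, comparable to the paper's $\sum_i(\alpha^n_i)^2\le\frac{4}{n+2}$. Second, and more substantively, the sum of squares is not the only aggregate you need: the Bennett bound \eqref{eq5.0.1c} (and any Freedman/Bernstein-type bound you invoke for the first estimate) requires a uniform bound on the largest single increment, i.e.\ on $\max_i\beta^n_i$, and $S_n=O(1/(n+2))$ only yields $\max_i\beta^n_i\le\sqrt{S_n}=O(1/\sqrt{n})$, which would ruin the prefactor $\frac{t(n+2)}{4\tilde{G}D}$ in the exponent. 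The paper gets $\max_i\alpha^n_i=\alpha^n_n=\frac{u_{n-1}}{2}\le\frac{2}{n+2}$ by first proving in \eqref{eq5.5} that the weights increase in $i$, and then the sum-of-squares bound \eqref{eq5.6} follows for free as $n(\alpha^n_n)^2$. So the monotonicity argument you bypassed is not an optional alternative to your recursion; it (or some substitute giving $\max_i\beta^n_i=O(1/n)$) is needed anyway for the third bound, and once you have it your recursion becomes redundant.
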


\begin{proof}

We start by expanding the right side of \eqref{eq3.3.3}, setting $A^n=B^n$ and substituting \eqref{eq3.9} back into \eqref{eq3.3.3} yields:

\begin{align}
	S(y_n) &\leq U_{(A^n,A^n)}(\tilde{f}_{1,\ldots,n},x_{1,\ldots,n}) - \sum_{i=1}^{n} \alpha^n_i \langle \bigtriangledown \delta_i(x_i), x_i-w^* \rangle \nonumber \\
	\text{(by Corollary \ref{state3.4})}	&\leq \frac{2 G^2}{\lambda (n+3)} + \sum_{i=1}^{n} \alpha^n_i r_i \label{eq5.1}
\end{align}

\noindent with each $r_i = -\langle \bigtriangledown \delta_i(x_i), x_i-w^* \rangle$ satisfying:

\begin{align}
	\text{(Cauchy's inequality) } -||\bigtriangledown \delta_i(x_i)|| ||x_i-w^*|| &\leq r_i \leq ||\bigtriangledown \delta_i(x_i)|| ||x_i-w^*|| \nonumber \\
		-\tilde{G} D &\leq r_i \leq \tilde{G} D \label{eq5.2}
\end{align}

\begin{align}
	\mathrm{Var} ( r_i ) &= \mathbb{E}[( \langle \bigtriangledown \delta_i(x_i),x_i-w^* \rangle -\mathbb{E}[\langle \bigtriangledown \delta_i(x_i),x_i-w^* \rangle] )^2] \nonumber \\
	\text{(by \eqref{eq3.9.1})}	&= \mathbb{E}[( \langle \bigtriangledown \delta_i(x_i),x_i-w^* \rangle )^2] \nonumber \\
	\text{(Cauchy's inequality)}	&\leq \mathbb{E}[||\bigtriangledown \delta_i(x_i)||^2 ||x_i-w^*||^2] \nonumber \\
		&\leq D^2 \mathbb{E}[||\bigtriangledown \delta_i(x_i)||^2] = D^2 \mathrm{Var} \left( \bigtriangledown \tilde{f}_i(x_i) \right) \leq D^2 \sigma^2 \label{eq5.3}
\end{align}


This immediately expose $S_n(y_{(A^n,A^n)}(x_{1,\ldots,n}))$ to several inequalities in non-parametric statistics that bound the probability of sum of independent random variables:

\begin{subequations}\label{eq5.4}
\begin{align}
	\text{(generalized Chernoff bound) }	\mathrm{Pr} \left( \sum_{i=1}^{n} \alpha^n_i r_i \geq t \right) &\leq \exp \left\{ -\frac{t^2}{4 \mathrm{Var} \left( \sum_{i=1}^{n} \alpha^n_i r_i \right) } \right\} \nonumber \\
	\text{(by \eqref{eq5.3})}	&\leq \exp \left\{ -\frac{t^2}{4 D^2 \sigma^2 \sum_{i=1}^{n} (\alpha^n_i)^2} \right\} \\
	\text{(Azuma-Hoeffding inequality) }	\mathrm{Pr} \left( \sum_{i=1}^{n} \alpha^n_i r_i \geq t \right) &\leq \frac{1}{2} \exp \left\{ - \frac{2 t^2}{\sum_{i=1}^{n} (\alpha^n_i)^2 (\max r_i - \min r_i)^2 } \right\} \nonumber \\
	\text{(by \eqref{eq5.2})}	&\leq \frac{1}{2} \exp \left\{ - \frac{2 t^2}{4 \tilde{G}^2 D^2 \sum_{i=1}^{n} (\alpha^n_i)^2} \right\} \\
	\text{(Bennett inequality) }	\mathrm{Pr} \left( \sum_{i=1}^{n} \alpha^n_i r_i \geq t \right) &\leq \exp \left\{ -\frac{t}{2 \max ||\alpha^n_i \epsilon_i||} \mathrm{ln} \left( 1+ \frac{t \max ||\alpha_i \epsilon_i||}{\mathrm{Var} \left( \sum_{i=1}^{n} \alpha^n_i r_i \right) } \right) \right\} \nonumber \\
	\text{(by \eqref{eq5.2}, \eqref{eq5.3})}	&\leq \exp \left\{ -\frac{t}{2 \tilde{G} D \max \alpha_i} \mathrm{ln} \left( 1+ \frac{t \tilde{G} \max \alpha^n_i}{D \sigma^2 \sum_{i=1}^{n} (\alpha^n_i)^2} \right) \right\}
\end{align}
\end{subequations}

In case of algorithm 1, if $A^n$ is recursively updated by \eqref{eq3.10}, then each two consecutive $\alpha^n_i$ has the following property:

\begin{flalign}
	&&	\text{(by \eqref{eq3.10}) } \frac{\alpha^n_{i+1}}{\alpha^n_i} &= \frac{\alpha^{i+1}_{i+1}}{\alpha^{i+1}_i} = \frac{\alpha^{i+1}_{i+1}}{\alpha^i_i (1-\alpha^{i+1}_{i+1})} && \nonumber \\
	&&	\text{(by \eqref{eq3.12.5}, \eqref{eq3.13})} &=\frac{u_{i-1}-\frac{u_{i-1}^2}{4}}{u_{i-1} \left( 1-\frac{u_{i-1}}{2}+\frac{u_{i-1}^2}{8} \right)} && \nonumber \\
	&&	\text{($u_{i-1} \leq 1$)} &> 1 && \nonumber \\
	&\Longrightarrow&	\alpha^n_{i+1} &> \alpha^n_{i} && \nonumber\\
	&\Longrightarrow&	\max \alpha^n_i &= \alpha^n_n = \frac{u_{n-1}}{2} \leq \frac{2}{n+2} && \label{eq5.5}
\end{flalign}

Accordingly $\sum_{i=1}^{n} (\alpha^n_i)^2$ can be bounded by

\begin{equation}\label{eq5.6}
	\sum_{i=1}^{n} (\alpha^n_i)^2 \leq n (\alpha^n_n)^2 \leq \frac{4 n}{(n+2)^2} \leq \frac{4}{n+2}
\end{equation}

Eventually, combining \eqref{eq5.1} \eqref{eq5.4}, \eqref{eq5.5} and \eqref{eq5.6} together yields the proposed high probability bounds \eqref{eq5.0.1}.

\end{proof}

By definition $\tilde{G}$ and $\sigma$ are both upper bounded by $G$. And if $\Gamma_f$ is undefined, by combining strong convexity condition $\mathbb{B}_{f} (x_1||\arg\min f(.)) = f(x_1)-\min f(.) \geq \frac{\lambda}{2} ||x_1-\arg\min f(.)||^2$ and \eqref{eq3.15} together we can still set

\begin{equation*}
	\Gamma_f=\left\{ ||. - x_1||^2 \leq \frac{G^2}{\lambda^2} \right\}
\end{equation*}

\noindent such that $D = \frac{2 G}{\lambda}$, while $\arg\min f(.)$ is always included in $\Gamma_f$. Consequently, even in worst cases \eqref{eq5.0} can be easily superseded by any of \eqref{eq5.0.1}, in which $\eta$ decreases exponentially with $t$ instead of inverse proportionally. In most applications both $\tilde{G}$ and $\sigma$ can be much smaller than $G$, and $\sigma$ can be further reduced if each $\tilde{\omega}(x_i)$ is estimated from averaging over several stochastic oracles provided simultaneously by a parallel/distributed system.

\section{Discussion}

In this article we proposed algorithm 1, a first-order algorithm for stochastic strongly convex optimization that asymptotically outperforms all state-of-the-art algorithms by four times, achieving less than $S$ suboptimality using only $\frac{2 G^2}{\lambda S}-3$ iterations and stochastic oracles in average. Theoretically algorithm 1 can be generalized to strongly convex functions w.r.t. arbitrary norms using technique proposed in \cite{Juditski2011uniform}, and a slightly different analysis can be used to find optimal methods for strongly smooth (a.k.a. gradient lipschitz continuous or g.l.c.) functions, but we will leave them to further investigations. We do not know if this algorithm is optimal and unimprovable, nor do we know if higher-order algorithms can be discovered using similar analysis. There are several loose ends we may possibly fail to scrutinize, clearly, the most likely one is that we assume:

\begin{equation*}
	\max_f S(y) = \max_f \{ f(y)-\min f(.) \} \leq \max_f f(y) - \min_f \min f(.)
\end{equation*}

However in fact, there is no case $\arg\max_f f(y) = \arg\min_f \min f(.) \quad \forall y \in \Gamma_f$, so this bound is still far from unimprovable. Another possible one is that we do not know how to bound $\frac{\lambda}{2} \sum_{i=1}^{n} \alpha^n_i ||x_i-y_n||^2$ by optimizing $x_n$ and $\alpha_{n}^{n}$, so it is isolated from \eqref{eq3.5} and never participate in parameter optimization of \eqref{eq3.12}, but actually it can be decomposed into:

\begin{align*}
	\sum_{i=1}^{n+1} \alpha^{n+1}_i ||x_i-y_{n+1}||^2 &= \min \sum_{i=1}^{n+1} \alpha^{n+1}_i ||x_i-.||^2 \\
		&= \sum_{i=1}^{n+1} \alpha^{n+1}_i ||x_i-y_n||^2 - \frac{1}{2 \lambda} || \bigtriangledown_{y_n} \left\{ \sum_{i=1}^{n+1} \alpha_i ||x_i-y_n||^2 \right\} ||^2 \\
		&= \sum_{i=1}^{n} \alpha^{n+1}_i ||x_i-y_n||^2 + \alpha_{n+1} \left[ ||x_{n+1}-y_n||^2 \right] - \frac{(\alpha^{n+1}_{n+1})^2}{2 \lambda} || y_n-x_{n+1} ||^2 \\
		&= (1-\alpha^{n+1}_{n+1}) \left[ \sum_{i=1}^{n} \alpha^n_i ||x_i-y_n||^2 \right] + \left[ \alpha_{n+1}-\frac{(\alpha^{n+1}_{n+1})^2}{2 \lambda} \right] || y_n-x_{n+1} ||^2
\end{align*}

\noindent such that $\left[ \alpha_{n+1}-\frac{(\alpha^{n+1}_{n+1})^2}{2 \lambda} \right] || y_n-x_{n+1} ||^2$ can be added into the right side of \eqref{eq3.12}, unfortunately, we still do not know how to bound it, but it may be proved to be useful in some alternative problem settings (e.g. in optimization of strongly smooth functions).

Most important, if $f(.)$ is $\lambda$-strongly convex and each $\tilde{f}_i(.)$ can be revealed completely by each oracle (instead of only its first-order information), then the principle of empirical risk minimization (ERM):

\begin{equation*}
	y_n = \arg\min \sum_{i=1}^n \tilde{f}_i(.)
\end{equation*}

\noindent easily outperforms all state-of-the-art stochastic methods by yielding the best-ever rate of convergence $\frac{\sigma^2}{2 \lambda n}$ \cite{sridharan2008fast}, and is still more than four times faster than algorithm 1 (through this is already very close for a first-order method). This immediately raises the question: how do we close this gap? and if first-order methods are not able to do so, how much extra information of each $\tilde{f}_i(.)$ is required to reduce it? We believe that solutions to these long term problems are vital in construction of very large scale predictors in computational learning, but we are still far from getting any of them.

%
%

\section*{Acknowledgements}

We want to thank Dr Francesco Orabona and Professor Nathan Srebro for their critical comments on this manuscript. We also want to thank Maren Sievers, Stefan Rothschuh, Raouf Rusmaully and Li He for their proof reading and/or technical comments on an C++ implementation of this method.

\bibliographystyle{plain}
\bibliography{reference}

\end{document}